\def\x{2.4}
\newcounter{IssueCounter}
\newtheorem{Issue}[IssueCounter]{Issue}
\newcounter{QuestionCounter}
\newtheorem{Question}[QuestionCounter]{Question}
\definecolor{color2}{rgb}{0.4,0.2,0.0}
\newtheorem*{Answer*}{Answer}
\definecolor{color1}{rgb}{0.0,0.4,0.2}
\newcounter{ReminderCounter}
\newtheorem{Reminder}[ReminderCounter]{Reminder}
\newcommand{\mb}[1]{\mathbb{#1}}
\newcommand{\mf}[1]{\mathfrak{#1}}
\def\beq#1#2\eeq{%
        \begin{equation}%
        \label{#1}%
            #2%
        \end{equation}%
    }
\newcommand{\slnc}[1][n]{\mf{sl}({#1},\mb{C})}
\newcommand{\rg}{\finitegroupfont{\Gamma}}%Reduction group
\newcommand{\GL}{\mathrm{GL}}
\newcommand{\SO}{\mathrm{SO}}
\newcommand{\PSL}{\mathrm{PSL}}
\newcommand{\Aut}[1]{\mathrm{Aut}\!\left(#1 \right)}
\newcommand{\diag}{\mathrm{diag}}
\newtheorem{Theorem}{Theorem}[section]
\newtheorem{Lemma}[Theorem]{Lemma}
\newtheorem{Example}[Theorem]{Example}
\newcommand\colorchain{gray}
\newcommand\colorp{red}
\newcommand\colorm{blue}
\tikzstyle{root}=[scale=1,shape=circle ]
\tikzstyle{2ndroot}=[scale=0.8,shape=circle ]
\tikzstyle{cross}=[scale=0.15,shape=circle, fill]
\tikzstyle{chain}=[color=\colorchain,->,shorten >=8pt,shorten <=8pt, >=stealth]
\tikzstyle{schain}=[color=\colorchain,-,shorten >=4pt,shorten <=4pt]
\tikzstyle{cochainp}=[thick, color=\colorp,->,shorten >=8pt,shorten <=8pt, >=stealth]
\tikzstyle{cochainm}=[thick, color=\colorm,->,shorten >=8pt,shorten <=8pt, >=stealth]
\tikzstyle{scochainp}=[color=\colorp,-,shorten >=3pt,shorten <=3pt]
\tikzstyle{scochainm}=[color=\colorm, dashed,-,shorten >=3pt,shorten <=3pt]
\newcommand{\scaleGG}{1.3}
\newcommand{\dynkinfont}{\normalsize}
\tikzstyle{dynkinnode}=[draw, color=black, shape=circle,minimum size=3.5 pt,inner sep=0]
\tikzstyle{nodynkinnode}=[inner sep=-1]
\tikzstyle{ldynkinnode}=[draw, color=blue, shape=circle,minimum size=3.5 pt,inner sep=0]
\tikzstyle{sdynkinnode}=[draw, color=red, shape=circle,minimum size=3.5 pt,inner sep=0]
\tikzstyle{mdynkinnode}=[draw, color=magenta, shape=circle,minimum size=3.5 pt,inner sep=0]
\tikzstyle{fdynkinnode}=[draw, color=blue, shape=circle,minimum size=3.5 pt,inner sep=0,fill=black]
\tikzstyle{fldynkinnode}=[draw, color=blue, shape=circle,minimum size=3.5 pt,inner sep=0,fill=black]
\tikzstyle{fsdynkinnode}=[draw, color=red, shape=circle,minimum size=3.5 pt,inner sep=0,fill=black]
\tikzstyle{fmdynkinnode}=[draw, color=magenta, shape=circle,minimum size=3.5 pt,inner sep=0,fill=black]
\tikzstyle{brace}=[ decorate, decoration={brace, amplitude=5pt}]
\tikzstyle{mbrace}=[decorate, decoration={brace, amplitude=5pt, mirror}]
\newcommand{\Vmin}{V}
\newcommand{\groupg}{G_2(\mb C)}
\newcommand{\algg}{\mf g_2(\mb C)}
\newcommand{\short}{2}
\newcommand{\dyng}[2]{\begin{tikzpicture}[scale=0.6,baseline=(one.base), font=\dynkinfont,decoration={
    markings,
        mark=at position 0.5 with {
\draw (-3pt,-3pt) -- (0pt,0pt);
\draw (0pt,-0pt) -- (-3pt,3pt);
} }]
\path 
node at ( 0,0) [nodynkinnode,label=90: $$] (one) {$#2$}
node at ( 1,0) [nodynkinnode,label=90: $$] (two) {$#1$}
;
\draw[shorten <=\short pt,shorten >=\short pt] (one) to node []{$ $} (two);
\draw[shorten <=\short pt,shorten >=\short pt] (one.45) to node []{$ $} (two.135);
\draw[shorten <=\short pt,shorten >=\short pt] (one.-45) to node []{$ $} (two.-135);
\fill[postaction={decorate}] (two) -- (one);
\end{tikzpicture}}
\newcommand{\affdyng}[3]{\begin{tikzpicture}[scale=0.6,baseline=(zero.base), font=\dynkinfont,decoration={
    markings,
        mark=at position 0.5 with {
\draw (-3pt,-3pt) -- (0pt,0pt);
\draw (0pt,-0pt) -- (-3pt,3pt);
} }]
\path 
node at ( 2,0) [nodynkinnode,label=90: $$] (zero) {$#1$}
node at ( 0,0) [nodynkinnode,label=90: $$] (one) {$#3$}
node at ( 1,0) [nodynkinnode,label=90: $$] (two) {$#2$}
;
\draw[gray,shorten <=\short pt,shorten >=\short pt] (zero) to node []{$ $} (two);
\draw[shorten <=\short pt,shorten >=\short pt] (one) to node []{$ $} (two);
\draw[shorten <=\short pt,shorten >=\short pt] (one.45) to node []{$ $} (two.135);
\draw[shorten <=\short pt,shorten >=\short pt] (one.-45) to node []{$ $} (two.-135);
\fill[postaction={decorate}] (two) -- (one);
\end{tikzpicture}}
\newcommand{\finitegroupfont}{\mathsf}
\newcommand{\cg}[1]{\finitegroupfont{C}_{#1}}%for the cyclic group there is also a \zn command created
\newcommand{\dg}[1]{\finitegroupfont{D}_{#1}}
\newcommand{\tg}{\finitegroupfont{T}}
\newcommand{\og}{\finitegroupfont{O}}
\newcommand{\ig}{\finitegroupfont{I}}
\title{
Polyhedral groups in $\groupg$
}
\author{Vincent Knibbeler, Sara Lombardo, Casper Oelen}
\begin{document}
\maketitle

\begin{abstract}
We classify embeddings of the finite groups $A_4$, $S_4$ and $A_5$ in the Lie group $\groupg$ up to conjugation.
\end{abstract}

\section{Introduction}
Cartan classified inner automorphisms of finite order of simple Lie algebras $\mf g$ over the complex numbers, up to conjugation
\cite{cartan1927geometrie}. See Reeder \cite{reeder2010torsion} for a modern recollection. 
%Motivated by the development of automorphic Lie algebras, 
Motivated by developments in the theory of automorphic Lie algebras (see \textit{e.g.} \cite{lombardo2005reduction,lombardo2010on,knibbeler2017higher,knibbeler2019hereditary,knibbeler2022automorphic}),
we would like to extend this classification of embeddings of cyclic groups to all finite subgroups of $\PSL(2,\mb C)$, which we will call polyhedral groups, consisting of cyclic groups of order $n$, dihedral groups of order $2n$, the tetrahedral group, the octahedral group, and the icosahedral group, denoted respectively
$$\cg{n},\quad \dg{n},\quad \tg,\quad\og,\quad\ig.$$
The groups $\tg$, $\og$ and $\ig$ are isomorphic to $A_4$, $S_4$ and $A_5$ respectively.

If $\mf{g}$ is one of the classical simple Lie algebras, one can find a satisfying classification using character theory for finite groups. For the exceptional Lie algebras, the full solution to this problem is still out of reach. The important case of embedding the icosahedral group and its double cover into $E_8({\mb C})$ was first solved by Frey \cite{frey1998conjugacy}. Recently, Frey and Rudelius \cite{frey20206d}  completed the classification of homomorphisms of (binary) polyhedral groups into $E_8({\mb C})$ and further refined its connection to 6-dimensional superconformal field theories. Moreover, they corrected some errors in the physics and mathematics literature on this topic, and thus (almost) reconciled their results.
%OLD 2
%The important case $\mf{g}=\mf e_8(\mb C)$ was first solved by Frey in \cite{frey1998conjugacy}. Frey and Rudelius \cite{frey20206d} further refined the connection of this case to 6-dimensional superconformal field theories%. and  (almost) reconcile the physics results counting these classes of homomorphisms with the results obtained mathematically by Frey in \cite{frey1998conjugacy}.
% OLD 1
%The important case $\mf{g}=\mf e_8(\mb C)$ has been solved recently by Frey and Rudelius \cite{frey20206d}, motivited by the connection to $6$-dimensional superconformal field theories. In this paper we consider the case $\mf{g}=\algg$.

 Throughout, let $\Vmin$ be the $7$-dimensional faithful representation of $\algg$. Its exponents generate a Lie group $\groupg$ in $\SO(V)$.
The centre of $\groupg$ is trivial and all automorphisms of $\algg$ are inner, which implies an isomorphism
$$\groupg\cong\Aut{\algg}$$
given by the adjoint representation. We will use this isomorphism without further mention and refer to Draper \cite{draper2018notes} for a comprehensive and accessible discussion of concrete models for groups of type $G_2$.

Finite subgroups of $\groupg$ have been classified by Cohen and Wales in 1983 \cite{cohen1983finite}, but the conjugacy classes of polyhedral groups are not listed in this paper. 
We are able to obtain this list for the groups $\tg$, $\og$ and $\ig$ with elementary methods because finite subgroups of $\groupg$ are conjugate if and only if they are conjugate in $\GL(V)$, the latter being decidable by character theory. This powerful theorem was obtained independently by Larsen in 1994 \cite{larsen1994on} and Griess in 1995 \cite{griess1995basic}.

Cohen and Griess \cite{cohen1987on} initiated a flurry of research into embeddings of simple and quasisimple groups, such as the icosahedral group $\ig$ and its double cover, into all exceptional Lie groups. See Frey and Ryba \cite{frey2018conjugacy} for a recent overview of the history and current state of the art. The classification of subgroups was finished in 2002 by Griess and Ryba \cite{griess2002classification},  who also settled the classification of conjugacy classes of embeddings in particular cases. They proved that there are precisely four conjugacy classes of monomorphisms $\ig\hookrightarrow\groupg$ which realise two conjugacy classes of icosahedral subgroups of $\groupg$ (the latter fact also obtained by Frey in 1998 \cite{frey1998bconjugacy}). 
 
In this paper, we classify monomorphisms from the tetrahedral and octahedral group into $\groupg$ up to conjugation and recover this classification for the icosahedral group obtained by Griess and Ryba with a different proof.
The classification of dihedral groups in $\groupg$ remains open.

\section{Cyclic groups in $\groupg$}
Elements of finite order in the connected Lie group $G=\Aut{\mf{g}}^0$ for a simple complex Lie algebra $\mf g$ are classified using the geometry of affine Weyl groups. The very short explanation is that any diagonalisable element of $G$ is conjugate to an element of a Cartan subgroup $T$, and two elements in a $T$ are conjugate in $G$ if and only if they are conjugate by the Weyl group $N_G(T)/T$. If an element of $G$ has also finite order, it is a rational point in a (compact) maximal torus in $G$. A maximal torus is isomorphic, through the exponential map, with a real Cartan subalgebra (CSA) up to translations by the co-weight lattice. Thus the conjugacy classes of elements of finite order in $G$ are identified with rational linear combinations of simple co-weights in the CSA, modulo the action of the Weyl group and the co-weight lattice. This latter group is known as the extended affine Weyl group.

One of the great insights of Cartan was to work modulo the affine Weyl group instead (the semidirect product of the Weyl group and the co-root lattice, rather than the co-weight lattice), which has a simplex as fundamental domain in the CSA, and handle the remaining symmetry using Dynkin diagrams. Determining the vertices of this simplex then yields
\begin{Theorem}[Cartan] Let $\mf{g}$ be a simple complex Lie algebra
and $\{\alpha_1,\ldots,\alpha_\ell\}$ a base for its root system with highest root $\sum_{i=1}^\ell a_i\alpha_i$. Set $a_0=1$.

Elements of order $n$ in $\Aut{\mf g}^0$, up to conjugation, are in one-to-one correspondence with sequences of nonnegative relative prime integers $\{s_0,\ldots,s_\ell\}$ such that
$$n=\sum_{i=0}^\ell a_i s_i,$$
up to symmetry of the affine Dynkin diagram.
The conjugacy class associated to $\{s_0,\ldots,s_\ell\}$ is represented by the automorphism sending the Chevalley generator $E_j$ of the Lie algebra to $\zeta^{s_j} E_j$, where $j=0,\ldots,\ell$ and $\zeta=\exp\frac{2\pi i}{n}$.
\end{Theorem}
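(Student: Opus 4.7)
The plan is to make the geometric outline preceding the theorem precise. Write $G = \mathrm{Aut}(\mathfrak{g})^0$, fix a Cartan subalgebra $\mathfrak{h} \subset \mathfrak{g}$ with compact real form $\mathfrak{h}_{\mathbb{R}}$ and corresponding maximal torus $T = \exp(2\pi i\, \mathfrak{h}_{\mathbb{R}}) \subset G$. Since $G$ is adjoint, the kernel of the exponential $\mathfrak{h}_{\mathbb{R}} \to T$ is the co-weight lattice $P^\vee$, so $T \cong \mathfrak{h}_{\mathbb{R}}/P^\vee$. First I would observe that any finite-order element of $G$ is semisimple, hence conjugate into $T$, and that two elements of $T$ are $G$-conjugate iff they are related by the Weyl group $W = N_G(T)/T$. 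Pulling everything back through the exponential shows that conjugacy classes of finite-order elements in $G$ are in bijection with the rational orbits on $\mathfrak{h}_{\mathbb{R}}$ under the extended affine Weyl group $\widetilde{W} = W \ltimes P^\vee$.

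Next I would exploit the decomposition $\widetilde{W} = W_{\mathrm{aff}} \rtimes \Omega$, with $W_{\mathrm{aff}} = W \ltimes Q^\vee$ the (non-extended) affine Weyl group and $\Omega \cong P^\vee/Q^\vee$ the fundamental group. The classical theorem on affine root systems gives $W_{\mathrm{aff}}$ a simplicial fundamental domain, namely the alcove
$$A = \bigl\{H \in \mathfrak{h}_{\mathbb{R}} : \alpha_i(H) \ge 0 \text{ for } i = 1, \ldots, \ell, \ \theta(H) \le 1\bigr\},$$
whose vertices are $0$ and $\omega_j^\vee/a_j$ for $j = 1, \ldots, \ell$ (with $\omega_j^\vee$ the fundamental co-weights dual to the simple roots). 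The residual action of $\Omega$ on $A$ coincides with the action by symmetries of the affine Dynkin diagram, so the classification reduces to $\Omega$-orbits on rational points of $A$.

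The last step is combinatorial. I would write a rational point of $A$ in the form $H = \frac{1}{n} \sum_{j=1}^\ell s_j \omega_j^\vee$ with $s_j \in \mathbb{Z}_{\ge 0}$ and, setting $s_0 = n - \sum_{j=1}^\ell a_j s_j$, observe that the alcove conditions translate exactly to $s_i \ge 0$ and $\sum_{i=0}^\ell a_i s_i = n$ (with $a_0 = 1$). Requiring $n$ to be the exact order of $\exp(2\pi i H)$ then forces $\gcd(s_0, \ldots, s_\ell) = 1$. A direct computation shows $\exp(2\pi i H)$ acts on the simple-root generator $E_j$ ($j \ge 1$) as multiplication by $e^{2\pi i \alpha_j(H)} = \zeta^{s_j}$, and on $E_0 = E_{-\theta}$ as multiplication by $e^{-2\pi i \theta(H)} = \zeta^{s_0}$, the latter equality using the defining relation for $s_0$.

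The main obstacle is the geometric step: identifying the alcove as a simplex with those specific vertices and realising $P^\vee/Q^\vee$ as symmetries of the affine Dynkin diagram. Both are standard facts about affine Weyl groups (Bourbaki) and can be cited rather than reproved, so the proof reduces to assembling the ingredients above.
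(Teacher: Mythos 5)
Your proposal follows the same route that the paper itself sketches in the two paragraphs preceding the theorem (the paper offers no proof beyond that sketch, deferring to Cartan, Kac, Reeder and Bourbaki): semisimplicity of torsion elements, reduction to the extended affine Weyl group $\widetilde W=W\ltimes P^\vee$ acting on rational points of the real Cartan subalgebra, the decomposition $\widetilde W=W_{\mathrm{aff}}\rtimes\Omega$, the alcove with vertices $0$ and $\omega_j^\vee/a_j$, and the translation into the coordinates $s_i$. The computations you indicate are correct; in particular $\gcd(s_0,\dots,s_\ell)=\gcd(n,s_1,\dots,s_\ell)$, so coprimality is indeed equivalent to the exponential having order exactly $n$, and the eigenvalues $\zeta^{s_j}$ on the Chevalley generators (including $E_0=E_{-\theta}$) come out as you say.

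One step is stated too strongly and, for the theorem as printed (general simple $\mathfrak g$), leaves a gap: $\Omega\cong P^\vee/Q^\vee$ does act on the alcove by symmetries of the affine Dynkin diagram, but it is in general only a \emph{proper} subgroup of the full automorphism group of that diagram; the quotient is $\mathrm{Out}(\mathfrak g)$. Your argument therefore classifies torsion elements up to conjugation by \emph{inner} automorphisms, i.e.\ up to $\Omega$-orbits of coordinate sequences, whereas the statement quotients by \emph{all} affine diagram symmetries, which corresponds to conjugation by the full group $\Aut{\mf g}$ (this is Kac's formulation). The two genuinely differ: in type $A_2$ there are three inner conjugacy classes of order-$3$ elements of $\PGL(3,\mb C)$, represented by $\diag(1,1,\zeta)$, $\diag(1,1,\zeta^2)$ and $\diag(1,\zeta,\zeta^2)$, but only two orbits of admissible sequences under the dihedral symmetry of the affine triangle; the missing identification is effected by the outer automorphism $g\mapsto(g^t)^{-1}$. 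To match the statement you must either enlarge your quotient from $\Omega$ to $\Omega\rtimes\mathrm{Out}(\mathfrak g)$ (which is the full affine diagram automorphism group) and read ``up to conjugation'' as conjugation in $\Aut{\mf g}$, or else weaken ``symmetry of the affine Dynkin diagram'' to ``the action of $\Omega$''. For $\algg$ both $\Omega$ and $\mathrm{Out}$ are trivial, so nothing in the rest of the paper is affected, but the general statement needs this repair.
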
 
The sequence $\{s_0,\ldots,s_\ell\}$ lists the coordinates for the class of automorphisms.
For the full story we refer to the original work of Cartan \cite{cartan1927geometrie} and Kac (who extended the result to all automorphisms of finite order) \cite{kac1969automorphisms,kac1990infinite} and the enlightening treatment of Reeder \cite{reeder2010torsion}. 
See Bourbaki \cite{bourbaki2002lie} for a thorough study of (extended) affine Weyl groups.
%For a complete study of (extended) affine Weyl groups we refer to Bourbaki \cite{bourbaki2002lie}.

\begin{Example} The affine Dynkin diagram of Lie type $G_2$ is given by
$$\begin{tikzpicture}[scale=0.6, font=\dynkinfont,decoration={
    markings,
        mark=at position 0.5 with {
\draw (-3pt,-3pt) -- (0pt,0pt);
\draw (0pt,-0pt) -- (-3pt,3pt);
} }]
\path 
node at ( 2,0) [dynkinnode,label=90: $1$] (zero) {$$}
node at ( 0,0) [dynkinnode,label=90: $3$] (one) {$$}
node at ( 1,0) [dynkinnode,label=90: $2$] (two) {$$}
;
\draw[gray] (zero) to node []{$ $} (two);
\draw[] (one) to node []{$ $} (two);
\draw[] (one.45) to node []{$ $} (two.135);
\draw[] (one.-45) to node []{$ $} (two.-135);
\fill[postaction={decorate}] (two) -- (one);
\end{tikzpicture}$$
with weights $a_i$ written above the nodes. If we look for automorphisms $g$ of order $3$ we find two conjugacy classes. With coordinates $s_i$ written in the diagram\footnote{
There is no canonical choice which simple root to call $\alpha_1$ and which to call $\alpha_2$. In the books of Bourbaki \cite{bourbaki2002lie} and Kac \cite{kac1990infinite} that are used by many, distinct choices are made. For this reason we decide to write the coordinates in the Dynkin diagram, so that the reader can keep their favourite book on the side without needing to translate.
}, they are $\affdyng{1}{1}{0}$ and $\affdyng{0}{0}{1}$.
The automorphism $g$ can be presented by extending these coordinates to the root system additively, modulo $n=3$, yielding a diagram of the eigenvalues of $g$ at the root spaces of $\algg$, cf.~Figure \ref{fig:root systems order 3}. The weights of the representation $\Vmin$ correspond to the short roots together with zero. Therefore, one can easily obtain the trace of $g$ on $\Vmin$ from the diagram in Figure \ref{fig:root systems order 3}.
\begin{center}
\begin{figure}[ht!]
\caption{Order $3$ elements in $\groupg$ diagonalised by a Cartan Weyl basis. Thick lines indicate simple roots. The number $s$ at a root indicates an eigenvalue $e^{\frac{2\pi i s}{3}}$ at the root space.} 
\label{fig:root systems order 3}
\begin{center}
\begin{tabular}{ccc} 
\\
$\affdyng{1}{1}{0}$&&$\affdyng{0}{0}{1}$\\
$
\begin{tikzpicture}[scale=\scaleGG]
\path 
node at ( 0,0) [root,label=270: $ $] (zero) {$ $}	
node at ( 0,0) [2ndroot,label=270: $ $] (zero) {$ $}
node at (-1.5,.866) [root,label=90: $ $] (m10) {$1 $}
node at (1,0) [root,label=0: $ $] (m01) {$0$}
node at (-.5,.866) [root,label=90: $ $] (m11) {$1 $}
node at (.5,.866) [root,label=90: $ $] (m12) {$1 $}
node at (1.5,.866) [root,label=90: $ $] (m13) {$1 $}
node at (0,2*.866) [root,label=90: $ $] (m23) {$2 $}
node at (1.5,-.866) [root,label=270: $ $] (n10) {$2 $}
node at (-1,0) [root,label=180: $ $] (n01) {$0 $}
node at (.5,-.866) [root,label=270: $ $] (n11) {$2 $}
node at (-.5,-.866) [root,label=270: $ $] (n12) {$2 $}
node at (-1.5,-.866) [root,label=270: $ $] (n13) {$2 $}
node at (0,-2*.866) [root,label=270: $ $] (n23) {$1 $}		
;
\draw[thick] (zero) to node []{$ $} (m10);
\draw[thick] (zero) to node []{$ $} (m01);
\draw[gray] (zero) to node []{$ $} (m11);
\draw[gray] (zero) to node []{$ $} (m12);
\draw[gray] (zero) to node []{$ $} (m13);
\draw[gray] (zero) to node []{$ $} (m23);
\draw[gray] (zero) to node []{$ $} (n10);
\draw[gray] (zero) to node []{$ $} (n01);
\draw[gray] (zero) to node []{$ $} (n11);
\draw[gray] (zero) to node []{$ $} (n12);
\draw[gray] (zero) to node []{$ $} (n13);
\draw[thick] (zero) to node []{$ $} (n23);
\end{tikzpicture}
$
&&
$
\begin{tikzpicture}[scale=\scaleGG]
\path 
node at ( 0,0) [root,label=270: $ $] (zero) {$ $}	
node at ( 0,0) [2ndroot,label=270: $ $] (zero) {$ $}
node at (-1.5,.866) [root,label=90: $ $] (m10) {$0 $}
node at (1,0) [root,label=0: $ $] (m01) {$1$}
node at (-.5,.866) [root,label=90: $ $] (m11) {$1 $}
node at (.5,.866) [root,label=90: $ $] (m12) {$2 $}
node at (1.5,.866) [root,label=90: $ $] (m13) {$0 $}
node at (0,2*.866) [root,label=90: $ $] (m23) {$0 $}
node at (1.5,-.866) [root,label=270: $ $] (n10) {$0 $}
node at (-1,0) [root,label=180: $ $] (n01) {$2 $}
node at (.5,-.866) [root,label=270: $ $] (n11) {$2 $}
node at (-.5,-.866) [root,label=270: $ $] (n12) {$1 $}
node at (-1.5,-.866) [root,label=270: $ $] (n13) {$0 $}
node at (0,-2*.866) [root,label=270: $ $] (n23) {$0 $}		
;
\draw[thick] (zero) to node []{$ $} (m10);
\draw[thick] (zero) to node []{$ $} (m01);
\draw[gray] (zero) to node []{$ $} (m11);
\draw[gray] (zero) to node []{$ $} (m12);
\draw[gray] (zero) to node []{$ $} (m13);
\draw[gray] (zero) to node []{$ $} (m23);
\draw[gray] (zero) to node []{$ $} (n10);
\draw[gray] (zero) to node []{$ $} (n01);
\draw[gray] (zero) to node []{$ $} (n11);
\draw[gray] (zero) to node []{$ $} (n12);
\draw[gray] (zero) to node []{$ $} (n13);
\draw[thick] (zero) to node []{$ $} (n23);
\end{tikzpicture}
$
\end{tabular}
\end{center}
\end{figure}
\end{center}

\end{Example}
We present all conjugacy classes of automorphisms of order $\le 5$ in Table \ref{tab:torsions}.%(an approach already proved useful in \cite{cohen1987on}).
\begin{center}
\begin{table}[ht!] 
\caption{Elements of $\groupg$ of order $\le 5$ (with $\phi^{\pm}=\frac{1\pm\sqrt{5}}{2}$).}
\label{tab:torsions}
\begin{center}
\begin{tabular}{ccc} \hline
order&coordinates&trace on $\Vmin$
\\\hline
2&\affdyng{0}{1}{0}&$-1$
\\\hline
3&\affdyng{1}{1}{0}&$1$
\\&\affdyng{0}{0}{1}&$-2$
\\\hline
4&\affdyng{2}{1}{0}&$3$
\\&\affdyng{1}{0}{1}&$-1$
\\\hline
%5&\affdyng{3}{1}{0}&$3+2(\zeta+\zeta^4)$
%\\&\affdyng{1}{2}{0}&$3+2(\zeta^2+\zeta^3)$
%\\&\affdyng{2}{0}{1}&$\zeta+\zeta^4$
%\\&\affdyng{0}{1}{1}&$\zeta^2+\zeta^3$
5&\affdyng{3}{1}{0}&$1+2\phi^+$%$3-2\phi^{-}$
\\&\affdyng{1}{2}{0}&$1+2\phi^-$%$3-2\phi^{+}$
\\&\affdyng{2}{0}{1}&$-\phi^{-}$
\\&\affdyng{0}{1}{1}&$-\phi^{+}$
\\\hline 
\end{tabular}
\end{center}
\end{table}
\end{center}

Two more lemmas are needed in preparation for the next section.
\begin{Lemma}
\label{lem:no PSL2C}
%The embedding $\cg{3}\hookrightarrow\groupg$ defined by $\affdyng{0}{0}{1}$ does not factor through $\PSL(2,\mb{C})$
If $\PSL(2,\mb{C})\hookrightarrow\groupg$ is a monomorphism, then the conjugacy class of order $3$ elements in $\PSL(2,\mb{C})$ is mapped into the conjugacy class of $\affdyng{1}{1}{0}$ in $\groupg$.
\end{Lemma}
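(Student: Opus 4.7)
The plan is to restrict the defining $7$-dimensional representation $\Vmin$ of $\groupg$ to $\PSL(2,\mb{C})$ along the hypothetical monomorphism, decompose it into irreducibles, and then read off the trace of an order-$3$ element. Comparing this trace against the two admissible values in Table \ref{tab:torsions} will pin down the conjugacy class.

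The first step is to write down the possible decompositions of $\Vmin|_{\PSL(2,\mb{C})}$. Every finite-dimensional holomorphic irreducible representation of $\PSL(2,\mb{C})$ has odd dimension; denote by $V_{2k+1}$ the representation of dimension $2k+1$, realised as the $2k$-th symmetric power of the standard $2$-dimensional representation of $\SL(2,\mb{C})$ (the weights are even, so it descends to $\PSL(2,\mb{C})$). Since $\PSL(2,\mb{C})$ is reductive its finite-dimensional representations are completely reducible, so $\Vmin|_{\PSL(2,\mb{C})}$ decomposes according to a partition of $7$ into odd parts:
\[ V_7,\quad V_5\oplus V_1^{\oplus 2},\quad V_3^{\oplus 2}\oplus V_1,\quad V_3\oplus V_1^{\oplus 4},\quad V_1^{\oplus 7}. \]
The last option is the trivial representation and is therefore ruled out by the injectivity hypothesis.

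Next I would compute the trace of an order-$3$ element on each summand. Such an element of $\PSL(2,\mb{C})$ lifts to $\diag(\omega,\omega^{-1})\in\SL(2,\mb{C})$ with $\omega=e^{2\pi i/3}$, and acts on $V_{2k+1}$ with eigenvalues $\omega^m$ for $m\in\{-2k,-2k+2,\ldots,2k\}$. A one-line calculation yields traces $1,0,-1,1$ on $V_1, V_3, V_5, V_7$ respectively. Summing over the four faithful decompositions above, the image of an order-$3$ element acts on $\Vmin$ with trace $1, 1, 1, 4$ in that order.

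Finally, because the monomorphism is injective the image of an order-$3$ element has order $3$ in $\groupg$, so by Table \ref{tab:torsions} its trace on $\Vmin$ lies in $\{1,-2\}$. The value $4$ is therefore excluded, and each of the three surviving decompositions forces the trace to be $1$, which uniquely identifies the conjugacy class as $\affdyng{1}{1}{0}$. I do not anticipate a substantive obstacle: the argument is essentially a bookkeeping exercise enabled by the fact that partitions of $7$ into odd parts are highly restricted, so no ambiguity can survive the trace comparison.
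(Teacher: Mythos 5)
Your proof is correct, but it takes a genuinely different route from the paper's. You restrict the $7$-dimensional representation $\Vmin$ to $\PSL(2,\mb C)$, enumerate the partitions of $7$ into odd parts, and compare the trace of an order-$3$ element on each faithful decomposition against the two admissible values $1$ and $-2$ from Table \ref{tab:torsions}; the only faithful option not giving an admissible value is $V_3\oplus V_1^{\oplus 4}$ (trace $4$), and every surviving case forces trace $1$, hence the class $\affdyng{1}{1}{0}$. The paper instead argues on the adjoint representation: assuming the order-$3$ class lands in $\affdyng{0}{0}{1}$, the element $H=\diag(1,-1)$ of $\slnc[2]$ would act on the simple root spaces with weights $\dyng{0}{2}$, and extending these weights over the root system of $\algg$ produces weight $6$ with multiplicity two but weight $4$ with multiplicity only one, which is impossible for a finite-dimensional $\slnc[2]$-module. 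Your version is more elementary bookkeeping: it needs only the list of odd-dimensional $\SL(2,\mb C)$-irreducibles together with the traces already tabulated, and it yields the possible restrictions of $\Vmin$ to $\PSL(2,\mb C)$ as a useful by-product. The paper's version needs no trace data at all and connects directly with the weighted-Dynkin-diagram formalism used later for the two embeddings of $\PSL(2,\mb C)$. Both arguments implicitly treat the monomorphism as holomorphic (yours via complete reducibility of $\Vmin|_{\PSL(2,\mb C)}$ into the standard irreducibles, the paper's via passing to the Lie algebra), which is the intended reading, so this is not a gap specific to your approach.
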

\begin{proof}
An element $g$ of order $3$ in $\PSL(2,\mb C)$ is conjugate to $\pm\diag(e^\frac{2\pi i}{6},e^{-\frac{2\pi i}{6}})=\pm\ e^{\frac{2\pi i}{6}H}$ where $H=\diag(1,-1)$ belongs to the Lie algebra $\slnc[2]$ of $\PSL(2,\mb C)$. Suppose $g$ is mapped to the conjugacy class of $\affdyng{0}{0}{1}$ in $\groupg$. Then $H$ acts on the simple root spaces with weights $\dyng{0}{2}$. By linear extension over the root system, we find all weights of $\algg$ as representation of $\slnc[2]$, and see weight $6$ has multiplicity two and weight $4$ has multiplicity one. Such a representation of $\slnc[2]$ does not exist. Hence, $g$ is mapped to the other conjugacy class of order $3$ elements in $\groupg$.
\end{proof}

\begin{Lemma}
\label{lem:character relation}
Let $g$ be a diagonalisable element of $\groupg$ with trace $\chi_\Vmin(g)$ and denote the trace of its action on $\algg$ by $\chi_{\algg}(g)$. Then $$\chi_{\algg}(g)=\frac{\chi_\Vmin(g)^2-\chi_\Vmin(g^2)-2\chi_\Vmin(g)}{2}.$$
\end{Lemma}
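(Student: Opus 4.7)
The plan is to derive the identity from the standard character formula for the exterior square, together with the $G_2$-module decomposition $\Lambda^2 V \cong \Vmin \oplus \algg$.

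First I would recall the general identity, valid for any finite-dimensional representation $W$ of any group and any diagonalisable element $h$, that
$$\chi_{\Lambda^2 W}(h)=\frac{\chi_W(h)^2-\chi_W(h^2)}{2}.$$
This follows immediately by diagonalising: if the eigenvalues of $h$ on $W$ are $\lambda_1,\ldots,\lambda_n$ then $\chi_{\Lambda^2 W}(h)=\sum_{i<j}\lambda_i\lambda_j=\tfrac{1}{2}\bigl((\sum \lambda_i)^2-\sum\lambda_i^2\bigr)$.

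The main content is therefore the decomposition $\Lambda^2 \Vmin \cong \Vmin \oplus \algg$ as $\groupg$-modules. I would argue this as follows. Because $\groupg\subset \SO(\Vmin)$, the representation $\Vmin$ carries an invariant non-degenerate symmetric bilinear form, so $\Vmin\cong \Vmin^*$ and $\Lambda^2 \Vmin$ is identified with the Lie algebra $\mathfrak{so}(\Vmin)\cong\mathfrak{so}(7)$. As a $\groupg$-module $\mathfrak{so}(7)$ contains $\algg$ as a subalgebra, and by the Killing form of $\mathfrak{so}(7)$ its orthogonal complement is a $\groupg$-invariant complement $\mathfrak{m}$ of dimension $21-14=7$. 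A dimension and highest-weight check (or equivalently: $\mathfrak{m}$ is an irreducible $7$-dimensional $\groupg$-module, and the only such module is $\Vmin$) then shows $\mathfrak{m}\cong \Vmin$, giving
$$\Lambda^2\Vmin\cong \algg\oplus \Vmin.$$

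Combining the two steps,
$$\chi_{\algg}(g)+\chi_{\Vmin}(g)=\chi_{\Lambda^2\Vmin}(g)=\frac{\chi_{\Vmin}(g)^2-\chi_{\Vmin}(g^2)}{2},$$
and rearranging yields the claimed formula. The only non-trivial step is the module decomposition $\Lambda^2\Vmin\cong \algg\oplus\Vmin$; the cleanest route is the $\mathfrak{so}(7)$-argument above, which avoids any explicit computation with weights beyond a single dimension count.
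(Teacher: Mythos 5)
Your proposal is correct and follows essentially the same route as the paper: both rest on the decomposition $\mf{so}(\Vmin)\cong\Lambda^2\Vmin\cong\algg\oplus\Vmin$ as $\groupg$-modules together with the eigenvalue identity $\chi_{\mf{so}(\Vmin)}(g)=\bigl(\chi_\Vmin(g)^2-\chi_\Vmin(g^2)\bigr)/2$. The only real difference is in how the $7$-dimensional complement is identified: you assert it is irreducible (or defer to a highest-weight check), while the paper rules out the alternative --- a trivial complement --- by observing that it would make $\algg$ a proper ideal of the simple Lie algebra $\mf{so}(\Vmin)$, which is a slightly cleaner way to close that step.
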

\begin{proof}
The Lie group $\groupg$ is realised as a Lie subgroup of $\SO(\Vmin)$. This turns $\mf{so}(\Vmin)$ into a $21$-dimensional representation of $\algg$, which has $\algg$ as $14$-dimensional subrepresentation. By complete reducibility there must be a $7$-dimensional representation $U$ such that  $\mf{so}(\Vmin)=\algg\oplus U$ as $\algg$-representation. It follows from the classification of representations of $\algg$ that $U$ is either trivial or $U=\Vmin$. If $U$ is trivial, then $\algg$ is a nontrivial ideal in $\mf{so}(\Vmin)$, contradicting the simplicity of the latter. Hence $\mf{so}(\Vmin)=\algg\oplus \Vmin.$ 

If we consider the trace of $g$ on the left- and right-hand side and observe that the trace of $g$ on $\mf{so}(\Vmin)$ is related to $\chi_\Vmin(g)$ by $\chi_{\mf{so}(\Vmin)}(g)=(\chi_\Vmin(g)^2-\chi_\Vmin(g^2))/2$ we obtain the desired result.
\end{proof}

\section{$\tg\og\ig$ groups in $\groupg$}
\newcommand{\ga}{\gamma_a}
\newcommand{\gb}{\gamma_b}
\newcommand{\gc}{\gamma_c}
The $\tg\og\ig$ groups have presentation
\[\rg=\langle\ga,\gb,\gc\,|\,\ga^n, \gb^3,\gc^2,\ga\gb\gc\rangle.\]
Taking $n=3, 4$ or $5$ results in  $\tg, \og$ or $\ig$ respectively.
\begin{center}
\begin{table}[ht!]
\caption{Irreducible characters of $\tg$, with $\zeta=e^{\frac{2\pi i}{3}}$, of $\og$ and $\ig$, with 
%$\zeta=e^{\frac{2\pi i}{5}}$ and $A=-\zeta-\zeta^4$, and $^*A=-\zeta^2-\zeta^3$
$\phi^{\pm}=\frac{1\pm\sqrt{5}}{2}$.} 
\label{tab:ctt}
\begin{center}
\small
\begin{minipage}{0.28\linewidth}
\vspace{-4mm}
\begin{tabular}{c|ccccccc} 
$\tg$& $[1]$ & $[\ga]$ &$[\gc]$&$[\gb]$\\[1mm]
%$|[\gamma]|$&$1$&$4$&$3$&$4$\\
\hline\\[-3.5mm]
$\chi_1$ & $1$ &$1$&$1$&$1$\\
$\chi_2$ & $1$ &$\zeta^2$&$1$&$\zeta$\\
$\chi_3$ & $1$ &$\zeta$&$1$&$\zeta^2$\\
$\chi_4$ & $3$ &$0$&$-1$&$0$\\
\end{tabular}
\end{minipage}
\begin{minipage}{0.35\linewidth}
\begin{tabular}{c|cccccccc}
 $\og$& $[1]$ & $[\gc]$ &$[\gb]$&$[\ga^2]$&$[\ga]$\\[1mm]
%$|[\gamma]|$&$1$&$6$&$8$&$3$&$6$\\
\hline\\[-3.5mm]
$\chi_1$ & $1$ &$1$&$1$&$1$&$1$\\
$\chi_2$ & $1$ &$-1$&$1$&$1$&$-1$\\
$\chi_3$ & $2$ &$0$&$-1$&$2$&$0$\\
$\chi_4$ & $3$ &$-1$&$0$&$-1$&$1$\\
$\chi_5$ & $3$ &$1$&$0$&$-1$&$-1$\\
\end{tabular}
\end{minipage}
\begin{minipage}{0.35\linewidth}
\begin{tabular}{c|cccccccc} 
 $\ig$& $[1]$ & $[\gb]$ &$[\gc]$&$[\ga]$&$[\ga^2]$\\[1mm]
%$|[\gamma]|$&$1$&$20$&$15$&$12$&$12$\\
\hline\\[-3.5mm]
%$\chi_1$ & $1$ &$1$&$1$&$1$&$1$\\
%$\chi_2$ & $3$ &$0$&$-1$&$A$&$^*A$\\
%$\chi_3$ & $3$ &$0$&$-1$&$^*A$&$A$\\
%$\chi_4$ & $4$ &$1$&$0$&$-1$&$-1$\\
%$\chi_5$ & $5$ &$-1$&$1$&$0$&$0$\\
$\chi_1$ & $1$ &$1$&$1$&$1$&$1$\\
$\chi_2$ & $3$ &$0$&$-1$&$\phi^-$&$\phi^+$\\
$\chi_3$ & $3$ &$0$&$-1$&$\phi^+$&$\phi^-$\\
$\chi_4$ & $4$ &$1$&$0$&$-1$&$-1$\\
$\chi_5$ & $5$ &$-1$&$1$&$0$&$0$\\
\end{tabular}
\normalsize
\end{minipage}
\end{center}
\end{table}
\end{center}

\begin{Lemma}
\label{lem:trace order 3}
%Let $\rg$ be a $\tg\og\ig$ group and $\rho:\rg\hookrightarrow \groupg$ a monomorphism. Then the trace of $\rho(\gamma)$ is $1$ if it has order $3$.
If $\rg$ is a $\tg\og\ig$ group and $\rg\hookrightarrow\groupg$ a monomorphism, then any element $\gamma\in\rg$ of order $3$ is mapped to the class of $\affdyng{1}{1}{0}$ in $\groupg$.
\end{Lemma}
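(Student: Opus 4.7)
I would proceed by character theory. Restrict the faithful $7$-dimensional representation $\Vmin$ to the image of $\rg\hookrightarrow\groupg$, and decompose it into irreducibles using Table \ref{tab:ctt}: write $\Vmin|_\rg=\sum_i n_i\chi_i$ with $n_i\in\mb Z_{\ge 0}$. The inclusion $\groupg\subset\SO(\Vmin)$ makes $\Vmin$ self-dual, so multiplicities of complex-conjugate characters of $\rg$ agree (relevant only for $\rg=\tg$) and $\chi_\Vmin$ is real-valued on $\rg$. By Table \ref{tab:torsions} the trace of the image of any $\rg$-element on $\Vmin$ is heavily constrained; in particular $\chi_\Vmin(\gc)=-1$ (there is a unique class of order $2$) and $\chi_\Vmin(\gb)\in\{1,-2\}$. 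The aim is to exclude $\chi_\Vmin(\gb)=-2$.

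The argument then splits into three short linear-arithmetic cases. For $\rg=\ig$, the hypothesis $\chi_\Vmin(\gb)=n_1+n_4-n_5=-2$ forces $n_5\ge 2$, whereas the dimension equation $n_1+3n_2+3n_3+4n_4+5n_5=7$ demands $n_5\le 1$. For $\rg=\og$, I would combine the dimension equation with $\chi_\Vmin(\ga^2)=-1$ (noting $\ga^2$ has order $2$) to get $n_4+n_5=2$ and $n_1+n_2+2n_3=1$; every nonnegative integer solution has $n_3=0$, forcing $\chi_\Vmin(\gb)=n_1+n_2-n_3=1$. For $\rg=\tg$, reality gives $n_2=n_3$, and the dimension equation with $\chi_\Vmin(\gc)=-1$ forces $n_4=2$ and $n_1+2n_2=1$; the hypothesis $\chi_\Vmin(\gb)=n_1-n_2=-2$ would then give $n_1=-1$, a contradiction.

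Finally, any order-$3$ element of $\rg$ is conjugate in $\rg$ to $\gb^{\pm 1}$ (for $\og,\ig$ all order-$3$ elements form a single class, and for $\tg$ the two order-$3$ classes are $[\gb]$ and its inverse), and reality of $\Vmin$ gives $\chi_\Vmin(\gb^{-1})=\chi_\Vmin(\gb)=1$. Since the two conjugacy classes of order-$3$ elements in $\groupg$ are distinguished by their trace on $\Vmin$, every order-$3$ element of $\rg$ maps to the class $\affdyng{1}{1}{0}$. The main obstacle is just carrying out the three separate case-checks; each reduces to a handful of linear constraints on the multiplicities $n_i$, and no Lie-theoretic input beyond Table \ref{tab:torsions} is required (in particular, Lemma \ref{lem:no PSL2C} is not invoked).
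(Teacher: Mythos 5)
Your proposal is correct and follows essentially the same route as the paper: both reduce the claim to the fact (read off from Table \ref{tab:torsions}) that the unique involution class has trace $-1$ and the two order-$3$ classes are distinguished by traces $1$ and $-2$, and then check that no $7$-dimensional character of a $\tg\og\ig$ group is compatible with the value $-2$ at order-$3$ elements. The paper states this last verification as a one-line observation, whereas you carry out the multiplicity arithmetic explicitly (and note the conjugacy of order-$3$ elements to $\gb^{\pm1}$); this is a welcome filling-in of detail rather than a different argument.
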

\begin{proof}In Table \ref{tab:torsions} we see that $\groupg$ only has one class of involutions, which has trace $-1$, and two classes of elements of order $3$, with traces $1$ (at the class of $\affdyng{1}{1}{0}$) and $-2$. The claim follows by observing that the $\tg\og\ig$ groups do not have a seven dimensional character with values $-1$ and $-2$ at the elements of order $2$ and $3$ respectively.
\end{proof}
% OLD, changed following the Reviewer
%This leaves only one possible character for $\tg\hookrightarrow \groupg$, two possible characters for $\og\hookrightarrow \groupg$ and four possible characters for $\ig\hookrightarrow \groupg$ as in Table \ref{tab:characters}.
In Table \ref{tab:characters}, we list all $7$-dimensional characters of $\tg\og\ig$ groups with value $-1$ and $1$ at elements of order $2$ and $3$ respectively, and irrational value at elements of order $5$. Due to Lemma \ref{lem:trace order 3} and Table \ref{tab:torsions}, we know that these conditions are necessary for the character of a monomorphism of a $\tg\og\ig$ group into $G_2(\mathbb{C})$.
\begin{center}
\begin{table}[ht!] 
\caption{Characters of $\tg\og\ig$ groups in $\groupg$.}
\label{tab:characters}
\begin{center}
\begin{tabular}{llllllll} 
$\tg$ && $\og$ && $\ig$\\
\hline\\[-3mm]
$\chi_1+2\chi_4$ && $\chi_1+2\chi_4$ && $\chi_1+2\chi_2$\\
&&$\chi_2+\chi_4+\chi_5$&&$\chi_1+2\chi_3$
\\&&&&$\chi_2+\chi_4$
\\&&&&$\chi_3+\chi_4$
\\[2mm]\hline 
\end{tabular}
\end{center}
\end{table}
\end{center}

%\begin{center}
%\begin{table}[ht!] 
%\caption{Characters of $\tg\og\ig$ groups in $\groupg$.}
%\label{tab:characters}
%\begin{center}
%\begin{tabular}{cc} \hline\\[-3mm]
%$\tg$&$\chi_1+2\chi_4$
%\\[2mm]$\og$&$\chi_1+2\chi_4$
%\\&$\chi_2+\chi_4+\chi_5$
%\\[2mm]$\ig$&$\chi_1+2\chi_3$
%\\&$\chi_1+2\chi_2$
%\\&$\chi_3+\chi_4$
%\\&$\chi_2+\chi_4$
%\\[2mm]\hline 
%\end{tabular}
%\end{center}
%\end{table}
%\end{center}
One way to construct embeddings of polyhedral groups is through a composition $$\rg\hookrightarrow\PSL(2,\mb C)\hookrightarrow\groupg.$$
There is one embedding $\tg\hookrightarrow\PSL(2,\mb C)$, two embeddings $\og\hookrightarrow\PSL(2,\mb C)$ and also two embeddings $\ig\hookrightarrow\PSL(2,\mb C)$, up to conjugation.
Moreover, there are precisely two embeddings $\PSL(2,\mb C)\hookrightarrow \groupg$ up to conjugation \cite{collingwood1993nilpotent}. They are represented by the weighted Dynkin diagrams $\dyng{2}{2}$ and $\dyng{0}{2}$. The linear extension of these weights to the root system yields the weights of $\algg$ as a representation of $\slnc[2]$.

The characters of the various compositions realise all options from Table \ref{tab:characters}. From character theory we know that this classifies the embeddings in $\groupg$ up to conjugation in $\GL(\Vmin)$. Thanks to the work of Larsen and Griess \cite{larsen1994on,griess1995basic}, we can conclude that these conjugation classes correspond to the conjugation classes in $\groupg$. Thus we arrive at our main results.
\begin{Theorem}
\label{thm:main}
The conjugation classes of monomorphisms of $\tg$, $\og$ and $\ig$ into $\groupg$ are classified by the characters in Table \ref{tab:characters}.
\end{Theorem}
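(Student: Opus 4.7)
The plan is to establish the theorem in three steps: (i) show that the character of every monomorphism appears in Table \ref{tab:characters}, (ii) realise each listed character via a composition through $\PSL(2,\mb C)$, and (iii) invoke the Larsen--Griess theorem to pass from $\GL(\Vmin)$-conjugacy to $\groupg$-conjugacy. For step (i), given a monomorphism $\rho\colon\rg\hookrightarrow\groupg$, the function $\chi_{\Vmin}\circ\rho$ is a $7$-dimensional character of $\rg$. By Table \ref{tab:torsions}, the unique involution class of $\groupg$ has trace $-1$, so $\chi(\gc)=-1$; by Lemma \ref{lem:trace order 3}, order-$3$ elements of $\rg$ map into the class $\affdyng{1}{1}{0}$, giving trace $1$; and every order-$5$ element of $\groupg$ has irrational trace, one of $1+2\phi^{\pm}$ or $-\phi^{\pm}$, so for $\ig$ the character value on order-$5$ classes must be irrational. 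A finite enumeration of $7$-dimensional characters of $\tg$, $\og$, $\ig$ against Table \ref{tab:ctt} satisfying these constraints recovers precisely Table \ref{tab:characters}.

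For step (ii), I would construct embeddings via compositions $\rg\hookrightarrow\PSL(2,\mb C)\hookrightarrow\groupg$. There are $1$, $2$, $2$ classes of monomorphisms $\tg,\og,\ig\hookrightarrow\PSL(2,\mb C)$, and, as stated in the text, two classes of monomorphisms $\PSL(2,\mb C)\hookrightarrow\groupg$ represented by the weighted Dynkin diagrams $\dyng{2}{2}$ and $\dyng{0}{2}$. For each composition, extending the chosen Dynkin labels linearly over the short roots and adjoining the zero weight yields the decomposition of $\Vmin$ into irreducible $\slnc[2]$-modules; restricting each summand to the finite subgroup of $\PSL(2,\mb C)$ and summing the resulting characters produces the character of the composite monomorphism. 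I would tabulate these characters and verify they match Table \ref{tab:characters}.

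For step (iii), standard character theory implies that two monomorphisms with the same character into $\GL(\Vmin)$ are conjugate in $\GL(\Vmin)$. By the Larsen--Griess theorem cited in the introduction, $\GL(\Vmin)$-conjugacy of finite subgroups of $\groupg$ is equivalent to $\groupg$-conjugacy, completing the classification. The main obstacle lies in step (ii): a priori the number of compositions through $\PSL(2,\mb C)$ exceeds the number of characters in Table \ref{tab:characters}, so several compositions must produce coinciding characters. For instance, $\tg$ admits two compositions but contributes only one character, and $\og$ admits four but contributes only two. Verifying that the realised characters are precisely those in Table \ref{tab:characters}, with no omissions or spurious extras, is the core case analysis of the proof.
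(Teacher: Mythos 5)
Your proposal follows essentially the same route as the paper: necessary constraints on the character from Table \ref{tab:torsions} and Lemma \ref{lem:trace order 3} (trace $-1$, $1$, and irrational at elements of order $2$, $3$, $5$) yielding Table \ref{tab:characters}, realisation of each listed character by compositions $\rg\hookrightarrow\PSL(2,\mb C)\hookrightarrow\groupg$, and the Larsen--Griess theorem to convert $\GL(\Vmin)$-conjugacy into $\groupg$-conjugacy. The argument is correct, and your explicit remark that distinct compositions can realise the same character (so the realised list must be checked against Table \ref{tab:characters} for both omissions and extras) is exactly the verification the paper performs implicitly.
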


%\begin{Remark}
The part of this theorem concerning the icosahedral group can also be found in \cite[Section 4]{griess2002classification} where a different proof is given.
%\end{Remark}

%\begin{Remark}
There is an automorphism of $\ig$ sending $\ga$ to $\ga^2$. 
The conjugacy classes of monomorphisms $\ig\hookrightarrow\groupg$ with the first two and last two icosahedral characters of Table \ref{tab:characters} are interchanged when precomposed with this automorphism. Thus, we see that there are only two conjugacy classes of images of monomorphisms $\ig\hookrightarrow\groupg$, and recover \cite[Theorem 4.11]{frey1998bconjugacy}.

%\end{Remark}
\begin{Theorem}
\label{thm:PSL2C}
Each monomorphism of $\tg$, $\og$ and $\ig$ into $\groupg$ factors through $\PSL(2,\mb C)$.
\end{Theorem}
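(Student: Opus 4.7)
The plan is to derive this as a short corollary of Theorem~\ref{thm:main} combined with the realisation observation already made in the paragraph preceding that theorem. Given an arbitrary monomorphism $\rho\colon\rg\hookrightarrow\groupg$, Theorem~\ref{thm:main} asserts that $\rho$ is determined up to conjugation in $\groupg$ by its $\Vmin$-character, and that this character must appear in Table~\ref{tab:characters}.

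The first step in the proof is to verify that each of the $1+2+4=7$ characters in Table~\ref{tab:characters} is realised by at least one composition $\rho_0\colon\rg\hookrightarrow\PSL(2,\mb C)\xrightarrow{\iota}\groupg$. For this one pairs each known embedding $\rg\hookrightarrow\PSL(2,\mb C)$ (one for $\tg$, two for $\og$ and two for $\ig$) with each of the two conjugacy classes of embeddings $\iota\colon\PSL(2,\mb C)\hookrightarrow\groupg$ given by the weighted Dynkin diagrams $\dyng{2}{2}$ and $\dyng{0}{2}$, reads off the weights of $\Vmin$ as an $\slnc[2]$-module by linear extension (as in the proof of Lemma~\ref{lem:no PSL2C}), and evaluates the resulting seven-dimensional character on representatives of the conjugacy classes of $\rg$. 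A short case-by-case check then confirms that the characters so obtained cover Table~\ref{tab:characters}, with some redundancy for $\tg$ and $\og$ where distinct compositions yield the same character.

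The conclusion is then immediate: pick $\rho_0$ with the same character as $\rho$; by Theorem~\ref{thm:main} there exists $g\in\groupg$ with $\rho=g\rho_0 g^{-1}$, so $\rho$ takes values in $g\,\iota(\PSL(2,\mb C))\,g^{-1}$, which is itself the image of the monomorphism $x\mapsto g\iota(x)g^{-1}\colon\PSL(2,\mb C)\hookrightarrow\groupg$. Hence $\rho$ factors through $\PSL(2,\mb C)$. The only genuine work lies in the first step, and no real difficulty is expected there: the weighted Dynkin diagrams explicitly encode the $\slnc[2]$-action on $\Vmin$, and the potentially delicate point (that some compositions for $\tg$ and $\og$ must coincide in $\groupg$ rather than producing characters outside Table~\ref{tab:characters}) is already controlled by the previously established fact that Table~\ref{tab:characters} lists all admissible characters.
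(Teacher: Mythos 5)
Your proposal is correct and follows essentially the same route as the paper: the paper establishes Theorem~\ref{thm:PSL2C} together with Theorem~\ref{thm:main} by checking that the compositions $\rg\hookrightarrow\PSL(2,\mb C)\hookrightarrow\groupg$ realise every character in Table~\ref{tab:characters} and then invoking the Larsen--Griess result to upgrade character equality to conjugacy in $\groupg$. Your only difference is one of packaging (you cite Theorem~\ref{thm:main} as a black box rather than re-deriving the conjugacy step), which is harmless.
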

%Embeddings into a Lie group that factor through another Lie group, such as the above, are called Lie imprimitive. 
%Theorem \ref{thm:PSL2C} 
This result provides a practical construction, since monomorphisms $\rg\hookrightarrow\PSL(2,\mb C)$ and $\PSL(2,\mb C)\hookrightarrow\groupg$  can be found in the literature.
\begin{Theorem}
\label{thm:regular}
If $\rg$ is a $\tg\og\ig$ group embedded in $\Aut{\algg}$, then the only element in $\algg$ fixed by all $\gamma\in\rg$ is $0$.
\end{Theorem}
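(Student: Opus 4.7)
The goal is to show that the $\rg$-fixed subspace $\algg^\rg$ has dimension zero. Since $\algg^\rg$ is the trivial isotypic component of $\algg$ viewed as a $\rg$-representation,
$$\dim\algg^\rg=\langle\chi_\algg,1_\rg\rangle=\frac{1}{|\rg|}\sum_{\gamma\in\rg}\chi_\algg(\gamma),$$
so my plan is to evaluate this average and check that it vanishes for every embedding allowed by Theorem \ref{thm:main}.

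First, by Theorem \ref{thm:main} the character $\chi_\Vmin$ restricted to $\rg$ coincides with one of the entries of Table \ref{tab:characters}. Every element of $\rg$ has finite order and is therefore diagonalisable in $\groupg$, so Lemma \ref{lem:character relation} applies and gives
$$\chi_\algg(\gamma)=\tfrac{1}{2}\bigl(\chi_\Vmin(\gamma)^2-\chi_\Vmin(\gamma^2)-2\chi_\Vmin(\gamma)\bigr).$$
For each admissible $\chi_\Vmin$ I would read off $\chi_\Vmin(\gamma)$ and $\chi_\Vmin(\gamma^2)$ on every conjugacy class of $\rg$ using Table \ref{tab:ctt}, plug them into this formula, and compute the weighted class-sum $\frac{1}{|\rg|}\sum_{[\gamma]}|[\gamma]|\,\chi_\algg(\gamma)$. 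Each of the one tetrahedral, two octahedral, and four icosahedral cases should give $0$.

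The only real obstacle is bookkeeping in the icosahedral cases, where $\chi_\Vmin$ takes irrational values $1+2\phi^\pm$ or $-\phi^\mp$ on the two conjugacy classes of order $5$. The identities $\phi^++\phi^-=1$ and $(\phi^\pm)^2=\phi^\pm+1$ force the contributions from $[\ga]$ and $[\ga^2]$ to combine into rational quantities, after which the required cancellation is transparent.

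A conceptually cleaner route would exploit Theorem \ref{thm:PSL2C}: every embedding $\rg\hookrightarrow\groupg$ factors through $\PSL(2,\mb C)$, so $\algg$ carries an $\slnc[2]$-module structure whose restriction to $\rg$ is the one of interest. The weighted Dynkin diagrams $\dyng{2}{2}$ and $\dyng{0}{2}$ yield $\algg\cong V_{10}\oplus V_2$ and $\algg\cong V_4\oplus V_2^{\oplus 3}$ respectively, with $V_k$ the $(k+1)$-dimensional irreducible $\slnc[2]$-module. It then suffices to observe that $V_k^\rg=0$ for $k\in\{2,4,10\}$: for $k=2$ because the centraliser of a non-abelian finite subgroup of $\PSL(2,\mb C)$ has trivial Lie algebra, and for $k=4,10$ because the fundamental invariants of the binary tetrahedral, octahedral and icosahedral groups on $\mb C[x,y]$ sit in degrees $\{6,8,12\}$, $\{8,12,18\}$ and $\{12,20,30\}$ respectively, none of which can be combined to sum to $4$ or $10$.
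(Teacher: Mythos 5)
Your first argument is precisely the paper's proof: the authors compute $\chi_{\algg}$ for each character in Table \ref{tab:characters} via Lemma \ref{lem:character relation} and Theorem \ref{thm:main}, and observe that no resulting character has a trivial component; your sample computation (e.g.\ $14-4-6-4=0$ for $\tg$) is the intended verification. The alternative route through Theorem \ref{thm:PSL2C}, the decompositions $V_{10}\oplus V_2$ and $V_4\oplus V_2^{\oplus 3}$, and the degrees of the fundamental invariants of the binary polyhedral groups is also correct, but it is not the argument the paper gives.
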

\begin{proof}
Using Lemma \ref{lem:character relation} and Theorem \ref{thm:main}, we can compute all characters of $\rg$-actions on $\algg$ and observe that none of them has a trivial component.  
\end{proof}

%The reader might wonder if we have forgotten the dihedral groups. In fact, we have not classified the embeddings $\dg{n}\hookrightarrow\groupg$. The following example shows why this task cannot be completed with the same approach.

We have not classified embeddings of the dihedral groups in $\groupg$. The following example shows why this task cannot be completed with the same approach.

% OLD EXAMPLE, INCORRECT
%\begin{Example}
%Let $r\in\groupg$ be the standard representative of the class associated to $\affdyng{0}{0}{1}$, so that $r^3=1$. Let $s$ be the involution in the Weyl group that sends a short root to its inverse. We can see that $rsrs=1$ by applying it to Chevalley generators of $\algg$, which implies that the group generated by $r$ and $s$ is isomorphic to the dihedral group of order $6$. This example shows that Lemma \ref{lem:trace order 3}, Theorem \ref{thm:PSL2C} (cf. Lemma \ref{lem:no PSL2C}) and Theorem \ref{thm:regular} all fail for dihedral groups.
%\end{Example}   
\begin{Example}
We construct a monomorphism $\dg{3}\hookrightarrow\groupg$ which shows that Lemma \ref{lem:trace order 3}, Theorem \ref{thm:PSL2C} and Theorem \ref{thm:regular} all fail for dihedral groups. 
%We find this to be easiest to do with a concrete model $L$ of $\algg$ in $\mf{gl}(7,\mb C)$. Let 
We do so with a concrete model $L$ of $\algg$ in $\mf{gl}(7,\mb C)$. Let $x=(x_1, x_2, x_3)^t$ and let
\[
L=\left\{\begin{pmatrix}0&-\sqrt{2}y^t&-\sqrt{2}x^t\\\sqrt{2}x&a&l_y\\\sqrt{2}y&l_x&-a^t\end{pmatrix}\,|\,a\in\slnc[3],\,x,y\in\mb{C}^3\right\},%\quad l_{\begin{pmatrix}x_1\\x_2\\x_3\end{pmatrix}}=\begin{pmatrix}0&-x_3&x_2\\x_3&0&-x_1\\-x_2&x_1&0\end{pmatrix}.
\quad l_{x}=\begin{pmatrix}0&-x_3&x_2\\x_3&0&-x_1\\-x_2&x_1&0\end{pmatrix}.
\]
See \cite{draper2018notes} for a proof that (a conjugate of) $L$ is indeed a simple Lie subalgebra of $\mf{gl}(7,\mb C)$ of type $G_2$, so that we can identify  $L$ with $\algg$ and $\Aut{L}$ with $\groupg$.

Conjugation with the diagonal matrix $\diag(1,\zeta,\zeta,\zeta,\zeta^2,\zeta^2,\zeta^2)$, $\zeta=e^{\frac{2\pi i}{3}}$, 
%with entries $(1,\zeta,\zeta,\zeta,\zeta^2,\zeta^2,\zeta^2)$, $\zeta=e^{\frac{2\pi i}{3}}$, 
defines an automorphism of $\mf{gl}(7,\mb C)$ which preserves $L$. Let $r$ be its restriction to $L$. Then $r$ is an order $3$ automorphism of $L$. 
%OLD 
%Its fixed-point subgroup is isomorphic to $\slnc[3]$, hence $r$ belongs to the conjugacy class of $\affdyng{0}{0}{1}$ in $\groupg$ (cf.~Figure \ref{fig:root systems order 3}).
The elements of $L$ fixed by $r$ form a Lie subalgebra isomorphic to $\slnc[3]$, hence $r$ belongs to the conjugacy class of $\affdyng{0}{0}{1}$ in $\groupg$ (cf.~Figure \ref{fig:root systems order 3}).

The map $M\mapsto -M^t$ also defines an automorphism of $\mf{gl}(7,\mb C)$ which preserves $L$.
Let $s$ be its restriction to $L$, an automorphism of order $2$. Then $rs=sr^{-1}$, hence $r$ and $s$ generate a dihedral group of order $6$ in $\groupg$. 

Contrary to the case of the $\tg\og\ig$-groups, the map $\dg{3}\hookrightarrow\groupg$ we have constructed does not factor through  $\PSL(2,\mb C)$ because of Lemma \ref{lem:no PSL2C} and the fact that its image has nontrivial intersection with the conjugacy class of $\affdyng{0}{0}{1}$.
Moreover, we compute that the elements fixed by $\dg{3}$ form a subalgebra of $\algg$ isomorphic to $\slnc[2]$.
\end{Example}

\textbf{Acknowledgements} We are grateful to Jan Sanders for very helpful and stimulating discussions. We thank the anonymous reviewer for interesting historical remarks. 

\textbf{Funding} 
This work is supported by the Engineering and Physical Sciences Research Council (EPSRC): the work of SL and VK is supported by the grant EP/V048546/1; the work of CO is supported by the grant EP/W522569/1.

%\bibliographystyle{amsplain}%BibTeX
%%\bibliographystyle{acm}
%\bibliography{bibliography}%BibTeX

%%%%%%%%%%%%%%%%%%%%%%%%%%%%%%%%%%%%%%%%%%%%%%%%%%%%%%%%%%%%%%%%%%%%%
\def\cprime{$'$}
\providecommand{\bysame}{\leavevmode\hbox to3em{\hrulefill}\thinspace}
\providecommand{\MR}{\relax\ifhmode\unskip\space\fi MR }
% \MRhref is called by the amsart/book/proc definition of \MR.
\providecommand{\MRhref}[2]{%
  \href{http://www.ams.org/mathscinet-getitem?mr=#1}{#2}
}
\providecommand{\href}[2]{#2}

\end{document}